\newcommand\Tstrut{\rule{0pt}{2.6ex}}         % = `top' strut
\newcommand\Bstrut{\rule[-0.9ex]{0pt}{0pt}}   % = `bottom' strut
\definecolor{darkred}{RGB}{139,0,0}
\definecolor{darkgreen}{RGB}{0,100,0}
\definecolor{darkmagenta}{RGB}{139,0,139}
\definecolor{darkpurple}{RGB}{110,0,180}
\definecolor{darkblue}{RGB}{40,0,200}
\definecolor{darkorange}{RGB}{255,140,0}
\newcommand{\bsx}{\boldsymbol{x}}
\newcommand{\bsh}{\boldsymbol{h}}
\newcommand{\bsgamma}{\boldsymbol{\gamma}}
\newcommand{\bsk}{\boldsymbol{k}}
\newcommand{\bsy}{\boldsymbol{y}}
\newcommand{\rd}{\,{\rm d}}
\newcommand{\RR}{\mathbb{R}}
\newcommand{\NN}{\mathbb{N}}
\newcommand{\ZZ}{\mathbb{Z}}
\newcommand{\CC}{\mathbb{C}}
\newcommand{\cA}{\mathcal{A}}
\newcommand{\cO}{\mathcal{O}}
\newcommand{\cH}{\mathcal{H}}
\newcommand{\icomp}{\mathtt{i}}
\newtheorem{definition}{Definition}
\newtheorem{theorem}{Theorem}
\newtheorem{remark}[theorem]{Remark}
\newtheorem{lemma}[theorem]{Lemma}
\newcommand{\APP}{{\rm APP}}
\newcommand{\uu}{{\mathfrak u}}
\newcommand{\tra}{{\rm trace}}
\begin{document}

\title{Tractability of approximation in the weighted Korobov space in the worst-case setting --- a complete picture}

\author{Adrian Ebert and Friedrich Pillichshammer\thanks{The authors are supported by the Austrian Science Fund (FWF),
Projects F5506-N26 (Ebert) and F5509-N26 (Pillichshammer), which are parts of the Special Research Program ``Quasi-Monte Carlo Methods: Theory and Applications''.}}

\date{}

\maketitle

\begin{abstract}
In this paper, we study tractability of $L_2$-approximation of one-periodic functions from weighted Korobov spaces in the worst-case setting. The considered weights are of product form. For the algorithms we allow information from the class $\Lambda^{{\rm all}}$ consisting of all continuous linear functionals and from the class $\Lambda^{{\rm std}}$, which only consists of function evaluations. 

We provide necessary and sufficient conditions on the weights of the function space for quasi-polynomial tractability, uniform weak tractability, weak tractability and $(\sigma,\tau)$-weak tractability. Together with the already known results for strong polynomial and polynomial tractability, our findings provide a complete picture of the weight conditions for all current standard notions of tractability.
\end{abstract}

\noindent \textbf{Keywords:} $L_2$-approximation; tractability; Korobov space. \\
\noindent \textbf{MSC 2020:} 65D15, 65Y20, 41A25, 41A63.

\section{Introduction}

We study tractability of $L_2$-approximation of multivariate one-periodic functions from weighted Korobov spaces of finite smoothness $\alpha$ in the worst-case setting. The considered weights are of product form. This problem has already been studied in a vast number of articles and a lot is known for the two information classes $\Lambda^{{\rm all}}$ and $\Lambda^{{\rm std}}$, in particular for the primary notions of strong polynomial and polynomial tractability, but also for weak tractability; see, e.g.,~\cite{KSW06,NSW04,WW99,WW01} and also the books \cite{NW08,NW12}. However, there are also some newer tractability notions such as quasi-polynomial tractability (see~\cite{GW11}), $(\sigma,\tau)$-weak tractability (see~\cite{SW15}) or uniform weak tractability (see~\cite{S13}) which have not yet been considered for the approximation problem for weighted Korobov spaces. Indeed, in \cite[Open Problem~103]{NW12} Novak and Wo\'{z}niakowski asked for appropriate weight conditions that characterize quasi-polynomial tractability.

It is the aim of the present paper to close this gap and to provide matching necessary and sufficient conditions for quasi-polynomial, $(\sigma,\tau)$-weak and uniform weak tractability for both information classes $\Lambda^{{\rm all}}$ and $\Lambda^{{\rm std}}$, and therefore to extend and complete the already known picture regarding tractability of $L_2$-approximation in weighted Korobov spaces. In particular, we show that for the information class $\Lambda^{{\rm all}}$ the notions of quasi-polynomial tractability, uniform weak tractability and weak tractability are equivalent and any of these holds if and only if the weights become eventually less than one (see Theorem~\ref{thm_all}). For the class $\Lambda^{{\rm std}}$ we show that polynomial tractability and quasi-polynomial tractability are equivalent and additionally provide matching sufficient and necessary conditions for the considered notions of weak tractability (see Theorem~\ref{thm_std}). 

The remainder of this article is organized as follows. In Section~\ref{sec:basics} we recall the underlying function space setting of weighted Korobov spaces with finite smoothness and provide the basics about $L_2$-approximation for such spaces. Furthermore, we give the definitions of the considered tractability notions. The obtained results are presented in Section~\ref{sec:results}. Finally, the corresponding proofs can be found in Section~\ref{sec:proofs}. 

\section{Basic definitions} \label{sec:basics}

\subsubsection*{Function space setting}

The Korobov space $\cH_{s,\alpha,\bsgamma}$ with weight sequence $\bsgamma=(\gamma_j)_{j \ge 1} \in \RR^{\NN}$ is a reproducing kernel Hilbert space with kernel function $K_{s,\alpha,\bsgamma}: [0,1]^s \times [0,1]^s \to \CC$ given by
\begin{equation*}
	K_{s,\alpha,\bsgamma}(\bsx,\bsy)
	:=
	\sum_{\bsk \in \ZZ^s} r_{s,\alpha,\bsgamma}(\bsk) \exp(2 \pi \icomp \bsk\cdot (\bsx-\bsy))
\end{equation*}
and corresponding inner product
\begin{equation*}
	\langle f,g \rangle_{s,\alpha,\bsgamma}
	:=
	\sum_{\bsk \in \ZZ^s} \frac1{r_{s,\alpha,\bsgamma}(\bsk)} \, \widehat{f}(\bsk) \, \overline{\widehat{g}(\bsk)}
	\quad \text{and} \quad
	\|f\|_{s,\alpha,\bsgamma}
	=
	\sqrt{\langle f,f \rangle_{s,\alpha,\bsgamma}}\
	.
\end{equation*}
Here, the Fourier coefficients are given by
\begin{equation*}
	\widehat{f}(\bsk)
	=
	\int_{[0,1]^s} f(\bsx) \exp(-2\pi \icomp \bsk\cdot \bsx) \rd \bsx
\end{equation*}
and the used decay function equals $r_{s,\alpha,\bsgamma}(\bsk) = \prod_{j=1}^s r_{\alpha,\gamma_j}(k_j)$ with $\alpha > 1$ (the so-called smoothness parameter of the space) and
\begin{equation*}
	r_{\alpha,\gamma}(k)
	:=
	\left\{\begin{array}{ll}
		1 & \text{for } k=0, \\[0.5em]
		\frac{\gamma}{|k|^{\alpha}}  & \text{for } k \in \ZZ\setminus\{0\}.
	\end{array}\right.
\end{equation*}
The kernel $K_{s,\alpha,\bsgamma}$ is well defined for $\alpha > 1$ and for all $\bsx,\bsy \in [0,1]^s$, since
\begin{equation*}
	|K_{s,\alpha,\bsgamma}(\bsx,\bsy)| 
	\le 
	\sum_{\bsk \in \ZZ^s} r_{s,\alpha,\bsgamma}(\bsk) 
	= 
	\prod_{j=1}^s \left(1+2 \zeta(\alpha) \gamma_j\right) 
	< 
	\infty
	,
\end{equation*}
where $\zeta(\cdot)$ is the Riemann zeta function (note that $\alpha > 1$ and hence $\zeta(\alpha)<\infty$).

Furthermore, we assume in this article that the weights satisfy $1 \ge \gamma_1 \ge \gamma_2 \ge \dots \ge 0$.

\subsubsection*{Approximation in the weighted Korobov space}

We consider the operator $\APP_s: \cH_{s,\alpha,\bsgamma} \to L_2([0,1]^s)$ with $\APP_s (f) = f$ for all $f \in \cH_{s,\alpha,\bsgamma}$. The operator $\APP_s$ is the embedding from the weighted Korobov space $\cH_{s,\alpha,\bsgamma}$ to the space $L_2([0,1]^s)$. It is compact since $\alpha>1$; see \cite{NSW04}.
In order to approximate $\APP_s$ with respect to the $L_2$-norm $\|\cdot\|_{L_2}$ over $[0,1]^s$, it is well known (see \cite[Theorems~4.5 and 4.8]{NW08} or \cite{TWW})  that it suffices to employ linear algorithms $A_{n,s}$ 
that use $n$ information evaluations and are of the form
\begin{equation} \label{eq:alg_form}
	A_{n,s}(f)
	=
	\sum_{i=1}^n T_i(f) \, g_i
	\quad \text{for }
	f \in \cH_{s,\alpha,\bsgamma}	
\end{equation}
with functions $g_i \in L_2([0,1]^s)$ and bounded linear functionals $T_i \in \cH^\ast_{s,\alpha,\bsgamma}$ for $i = 1,\ldots,n$. We will assume that the considered functionals $T_i$ belong to some permissible class of information $\Lambda$. In particular, we study the class $\Lambda^{{\rm all}}$ consisting of the entire dual space $\cH^\ast_{s,\alpha,\bsgamma}$ and the class $\Lambda^{{\rm std}}$, which consists only of point evaluation functionals. Remember that $\cH_{s,\alpha,\bsgamma}$ is a reproducing kernel Hilbert space, which means that point evaluations are continuous linear functionals and therefore $\Lambda^{{\rm std}}$ is a subclass of $\Lambda^{{\rm all}}$. \\

The worst-case error of an algorithm $A_{n,s}$ as in \eqref{eq:alg_form} is then defined as
\begin{equation*}
	e(A_{n,s})
	:=
	\sup_{\substack{f \in \cH_{s,\alpha,\bsgamma} \\ \|f\|_{s,\alpha,\bsgamma} \le 1}} \|\APP_s (f) - A_{n,s}(f)\|_{L_2}
\end{equation*}
and the $n$-th minimal worst-case error with respect to the information class $\Lambda$ is given by
\begin{equation*}
	e(n,\APP_s;\Lambda)
	:=
	\inf_{A_{n,s} \in \Lambda} e(A_{n,s})
	,
\end{equation*}
where the infimum is extended over all linear algorithms of the form \eqref{eq:alg_form} with information from the class $\Lambda$. We are interested in how the approximation error of algorithms $A_{n,s}$ depends on the number of used information evaluations $n$ and how it depends on the problem dimension~$s$. To this end, we define the so-called information complexity as
\begin{equation*}
	n(\varepsilon,\APP_s; \Lambda)
	:=
	\min\{n \in \NN_0 \ : \  e(n,\APP_s;\Lambda) \le \varepsilon \}
\end{equation*}
with $\varepsilon \in (0,1)$ and $s \in \NN$. We note that it is well known and easy to see that the initial error equals one for the considered problem and therefore there is no need to distinguish between the normalized and the absolute error criterion. 

\subsubsection*{Notions of tractability} 

In order to characterize the dependence of the information complexity on the dimension~$s$ and the error threshold~$\varepsilon$, we will study several notions of tractability which are given in the following definition.

\begin{definition}
	Consider the approximation problem $\APP=(\APP_s)_{s \ge 1}$ for the information class $\Lambda$. We say we have:
	\begin{enumerate}[label=\rm{(\alph*)}]
		\item Polynomial tractability \textnormal{(PT)} if there exist non-negative numbers $\tau, \sigma, C$ such that
		\begin{equation*}
			n(\varepsilon,\APP_s; \Lambda)
			\le
			C \, \varepsilon^{-\tau} s^\sigma
			\quad \text{for all} \quad
			s \in \NN, \,\varepsilon \in (0,1)
			.
		\end{equation*}
		\item Strong polynomial tractability \textnormal{(SPT)} if there exist non-negative numbers $\tau, C$ such that
		\begin{equation}\label{defSPT}
			n(\varepsilon,\APP_s; \Lambda)
			\le
			C \, \varepsilon^{-\tau}
			\quad \text{for all} \quad
			s \in \NN,\, \varepsilon \in (0,1)
			.
		\end{equation}
		The infimum over all exponents $\tau \ge 0$ such that \eqref{defSPT} holds for some $C \ge 0$ is called the exponent of strong polynomial tractability and is denoted by $\tau^{\ast}(\Lambda)$.
		\item Weak tractability \textnormal{(WT)} if
		\begin{equation*}
			\lim_{s + \varepsilon^{-1} \to \infty} \frac{\ln n(\varepsilon,\APP_s; \Lambda)}{s + \varepsilon^{-1}}
			=
			0
			.
		\end{equation*}
		\item Quasi-polynomial tractability \textnormal{(QPT)} if there exist non-negative numbers $t, C$ such that
		\begin{equation}\label{defQPT}
			n(\varepsilon,\APP_s; \Lambda)
			\le
			C \, \exp(t \,(1 + \ln s) (1 + \ln \varepsilon^{-1}))
			\quad \text{for all} \quad
			s \in \NN,\, \varepsilon \in (0,1)
			.
		\end{equation} 
The infimum over all exponents $t \ge 0$ such that \eqref{defQPT} holds for some $C \ge 0$ is called the exponent
		of quasi-polynomial tractability and is denoted by $t^{\ast}(\Lambda)$.
		\item $(\sigma,\tau)$-weak tractability \textnormal{($(\sigma,\tau)$-WT)} for positive $\sigma,\tau$ if
		\begin{equation*}
			\lim_{s + \varepsilon^{-1} \to \infty} \frac{\ln n(\varepsilon,\APP_s; \Lambda)}{s^\sigma + \varepsilon^{-\tau}}
			=
			0
			.
		\end{equation*}
		\item Uniform weak tractability \textnormal{(UWT)} if $(\sigma,\tau)$-weak tractability holds for all $\sigma,\tau >0$.    
	\end{enumerate} 
\end{definition}    

We obviously have the following hierarchy of tractability notions: 
\begin{equation*}
	\text{SPT} \Rightarrow \text{PT} \Rightarrow \text{QPT} \Rightarrow \text{UWT} \Rightarrow (\sigma,\tau)\text{-WT} \quad \text{for all } (\sigma,\tau)\in (0,\infty)^2.
\end{equation*}
Furthermore, WT coincides with $(\sigma,\tau)$-WT for $(\sigma,\tau)=(1,1)$. 

For more information about tractability of multivariate problems we refer to the three volumes \cite{NW08,NW10,NW12} by Novak and Wo\'{z}niakowski.

\section{The results}\label{sec:results}

Here we state our results about quasi-polynomial-, weak- and uniform weak tractability of approximation in the weighted Korobov space $\cH_{s,\alpha,\bsgamma}$ for information from $\Lambda^{{\rm all}}$. In order to provide a complete picture of all instances at a glance, we also include the already known results for (strong) polynomial tractability which were first proved by Wasilkowski and Wo\'{z}niakowski in \cite{WW99}. In the remainder of this article, we will write $\bsgamma_I$ to denote the infimum of the sequence $\bsgamma = (\gamma_j)_{j \ge 1}$. 

\begin{theorem} \label{thm_all}
	Consider the approximation problem $\APP=(\APP_s)_{s \ge 1}$ for the information class $\Lambda^{{\rm all}}$ and let $\alpha>1$. Then we have the following conditions:
	\begin{enumerate}
		\item (Cf.~\cite{WW99}) Strong polynomial tractability for the class $\Lambda^{\mathrm{all}}$ holds if and only if $s_{\bsgamma}< \infty$, where for 
		$\bsgamma=(\gamma_j)_{j \ge 1}$ the sum exponent $s_{\bsgamma}$ is defined as
		\begin{equation*}
			s_{\bsgamma}=\inf\left\{\kappa>0 \ : \ \sum_{j=1}^{\infty} \gamma_j^{\kappa} < \infty \right\}
			,
		\end{equation*}
		with the convention that $\inf \emptyset=\infty$. In this case the exponent of strong polynomial tractability is 
		\begin{equation*}
			\tau^{\ast}(\Lambda^{\mathrm{all}})
			=
			2 \max\left(s_{\bsgamma},\frac{1}{\alpha}\right)
			.
		\end{equation*}
		\item (Cf.~\cite{WW99}) Strong polynomial tractability and polynomial tractability for the class $\Lambda^{\mathrm{all}}$ are equivalent.
		\item Quasi-polynomial tractability, uniform weak tractability and weak tractability for the class $\Lambda^{{\rm all}}$ are equivalent and hold if and only if $\bsgamma_I := \inf_{j \ge 1} \gamma_j < 1$.
		\item If we have quasi-polynomial tractability, then the exponent of quasi-polynomial tractability satisfies 
		\begin{equation*}
			t^{\ast}(\Lambda^{{\rm all}}) = 2 \max\left(\frac{1}{\alpha} , \frac{1}{\ln \bsgamma_I^{-1}}\right)
			.
		\end{equation*}
		In particular, if $\bsgamma_I=0$, then we set $(\ln \bsgamma_I^{-1})^{-1}:=0$ and
		we have that $t^{\ast}(\Lambda^{{\rm all}}) = \frac{2}{\alpha}$.
		\item For $\sigma >1$, weak $(\sigma,\tau)$-tractability for the class $\Lambda^{\rm{all}}$ holds for all weights $1 \ge \gamma_1 \ge \gamma_2 \ge \dots \ge 0$.
		\end{enumerate}
\end{theorem}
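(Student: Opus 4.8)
Throughout, the starting point is the classical reduction for the class $\Lambda^{{\rm all}}$: the $n$-th minimal worst-case error of $\APP_s$ is the square root of the $(n+1)$-st largest eigenvalue of $\APP_s^\ast\APP_s$, and for the weighted Korobov space these eigenvalues are precisely the numbers $r_{s,\alpha,\bsgamma}(\bsk)=\prod_{j=1}^s r_{\alpha,\gamma_j}(k_j)$, $\bsk\in\ZZ^s$, with the trigonometric monomials as eigenfunctions (see \cite{NSW04,NW08}). Since $r_{s,\alpha,\bsgamma}(\bszero)=1$ this gives $n(\varepsilon,\APP_s;\Lambda^{{\rm all}})=\#\{\bsk\in\ZZ^s: r_{s,\alpha,\bsgamma}(\bsk)>\varepsilon^2\}$, and the only computational tools needed are, for $\tau>1/\alpha$, the product formula $\sum_{\bsk\in\ZZ^s} r_{s,\alpha,\bsgamma}(\bsk)^\tau=\prod_{j=1}^s(1+2\zeta(\alpha\tau)\gamma_j^\tau)$ together with the Chebyshev bound $n(\varepsilon,\APP_s;\Lambda^{{\rm all}})\le\varepsilon^{-2\tau}\sum_{\bsk} r_{s,\alpha,\bsgamma}(\bsk)^\tau$, and the elementary divisor estimate $\#\{\bsl\in\NN^d:\prod_i l_i\le R\}\le R(1+\ln R)^{d-1}$. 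For items (1)--(2) (following \cite{WW99}) I would, for the upper bound, pick $q>\max(s_{\bsgamma},1/\alpha)$, so that $\sup_s\sum_{\bsk} r_{s,\alpha,\bsgamma}(\bsk)^q\le\exp(2\zeta(\alpha q)\sum_{j\ge1}\gamma_j^q)<\infty$; then $(n+1)\,\lambda_{s,n+1}^q\le\sum_{\bsk} r_{s,\alpha,\bsgamma}(\bsk)^q$ yields $n(\varepsilon,\APP_s)=O(\varepsilon^{-2q})$ uniformly, i.e.\ SPT with $\tau^\ast(\Lambda^{{\rm all}})\le2\max(s_{\bsgamma},1/\alpha)$. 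Conversely, PT with exponents $\tau,\sigma$ gives $\lambda_{s,n+1}\le(Cs^\sigma/n)^{2/\tau}$ for $n>Cs^\sigma$, so $\sum_{\bsk} r_{s,\alpha,\bsgamma}(\bsk)^q\le C'' s^{2q\sigma/\tau}$ for every $q>\max(\tau/2,1/\alpha)$; comparing with $\prod_{j=1}^s(1+2\zeta(\alpha q)\gamma_j^q)\ge(1+2\zeta(\alpha q)\gamma_s^q)^s$ forces first $\gamma_s\to0$ and then $\gamma_s^q=O((\ln s)/s)$, whence $\sum_j\gamma_j^p<\infty$ for all $p>q$ and so $s_{\bsgamma}\le\max(\tau/2,1/\alpha)<\infty$. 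Together with the one-dimensional lower bound $n(\varepsilon,\APP_1)\ge2\gamma_1^{1/\alpha}\varepsilon^{-2/\alpha}-1$ (which by itself forces any tractability exponent to be $\ge2/\alpha$) this proves PT$\Rightarrow$SPT and pins down $\tau^\ast(\Lambda^{{\rm all}})=2\max(s_{\bsgamma},1/\alpha)$.

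For item (3) the implications QPT$\Rightarrow$UWT$\Rightarrow$WT are the standard hierarchy, so two things remain. If $\bsgamma_I=1$, then $1\ge\gamma_1\ge\gamma_2\ge\cdots$ forces $\gamma_j=1$ for all $j$, hence $r_{s,\alpha,\bsgamma}(\bsk)=1$ for every $\bsk\in\{-1,0,1\}^s$, so $n(\varepsilon,\APP_s)\ge3^s$; letting $s\to\infty$ for a fixed $\varepsilon\in(0,1)$ gives $\ln n(\varepsilon,\APP_s)/(s+\varepsilon^{-1})\to\ln 3\neq0$, so WT fails. If $\bsgamma_I<1$, fix $\gamma_\ast\in(\bsgamma_I,1)$ and $j_0$ with $\gamma_j\le\gamma_\ast$ for $j\ge j_0$, and fix $\tau>1/\alpha$. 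If $r_{s,\alpha,\bsgamma}(\bsk)>\varepsilon^2$, then the set $A$ of active coordinates with index $\ge j_0$ satisfies $r_{s,\alpha,\bsgamma}(\bsk)\le\gamma_\ast^{|A|}$, hence $|A|\le M:=\lceil2\ln\varepsilon^{-1}/\ln\gamma_\ast^{-1}\rceil$. Stratifying over $|A|=m$ and estimating each stratum by the Chebyshev bound, using $\gamma_j\le\gamma_\ast$ on the coordinates of $A$ and $\sum_{k\neq0}(\gamma_\ast/|k|^\alpha)^\tau=2\zeta(\alpha\tau)\gamma_\ast^\tau=:b_\tau$, I obtain
\[
 n(\varepsilon,\APP_s;\Lambda^{{\rm all}})\le C_2\,\varepsilon^{-2\tau}\sum_{m=0}^{M}\binom{s}{m}b_\tau^{\,m},\qquad C_2:=\prod_{j=1}^{j_0-1}(1+2\zeta(\alpha\tau)\gamma_j^\tau).
\]
Since $M$ is of order $\ln\varepsilon^{-1}$ and $\binom sm\le s^m$, taking logarithms gives $\ln n(\varepsilon,\APP_s)\le\mathrm{const}+t\,(1+\ln s)(1+\ln\varepsilon^{-1})$ for a suitable $t$, which is QPT. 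The point that makes this work — and which the naive bound $\varepsilon^{-2\tau}\prod_{j=1}^s(1+2\zeta(\alpha\tau)\gamma_j^\tau)$ cannot see — is that at most $O(\ln\varepsilon^{-1})$ coordinates beyond index $j_0$ can be active, so the dependence on $s$ is polynomial of degree $O(\ln\varepsilon^{-1})$ rather than exponential.

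For item (4), the upper bound $t^\ast(\Lambda^{{\rm all}})\le2\max(1/\alpha,1/\ln\bsgamma_I^{-1})$ requires sharpening the previous estimate: in the $m$-th stratum one replaces the Chebyshev estimate of the active-tail part by the divisor estimate applied to $\prod_{j\in A}|k_j|<(\gamma_\ast^{m}\varepsilon^{-2})^{1/\alpha}$, separating the active coordinates with $|k_j|=1$ (each contributing only the small factor $\gamma_j\le\gamma_\ast$) from those with $|k_j|\ge2$, and then optimises over $m\le2\ln\varepsilon^{-1}/\ln\gamma_\ast^{-1}$; letting $\tau\downarrow1/\alpha$ and $\gamma_\ast\downarrow\bsgamma_I$ produces the exponent $2\max(1/\alpha,1/\ln\gamma_\ast^{-1})\to2\max(1/\alpha,1/\ln\bsgamma_I^{-1})$ (when $\bsgamma_I=0$ the tail gives no contribution and only the $\varepsilon^{-2/\alpha}$ part survives, so $t^\ast=2/\alpha$). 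For the matching lower bound: with $s=1$ one has $n(\varepsilon,\APP_1)$ of order $\gamma_1^{1/\alpha}\varepsilon^{-2/\alpha}$, forcing $t^\ast\ge2/\alpha$; and if $\bsgamma_I>0$, then $\gamma_j\ge\bsgamma_I$ for all $j$, so any $\bsk\in\{-1,0,1\}^s$ with exactly $m$ nonzero entries has $r_{s,\alpha,\bsgamma}(\bsk)\ge\bsgamma_I^{m}$, whence $n(\varepsilon,\APP_s)\ge2^{m}\binom{s}{m}$ as soon as $\bsgamma_I^{m}>\varepsilon^2$; choosing $m=m(\varepsilon)$ the largest such integer (so $m(\varepsilon)\sim2\ln\varepsilon^{-1}/\ln\bsgamma_I^{-1}$) and $s=\lceil\varepsilon^{-1}\rceil$ makes $\ln n(\varepsilon,\APP_s)/[(1+\ln s)(1+\ln\varepsilon^{-1})]\to2/\ln\bsgamma_I^{-1}$, so $t^\ast\ge2/\ln\bsgamma_I^{-1}$.

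Finally, item (5) is easy: for any fixed $\tau>1/\alpha$, the Chebyshev bound and $\gamma_j\le1$ give $\ln n(\varepsilon,\APP_s;\Lambda^{{\rm all}})\le2\tau\ln\varepsilon^{-1}+s\ln(1+2\zeta(\alpha\tau))$. Writing $\tau$ now for the exponent of the $(\sigma,\tau)$-WT (again taken $>1/\alpha$) and using $\frac{a+b}{c+d}\le\min(\frac ac,\frac ad)+\min(\frac bc,\frac bd)$ for positive reals, the first summand of $\ln n(\varepsilon,\APP_s)/(s^\sigma+\varepsilon^{-\tau})$ is at most $\min(2\tau s^{-\sigma}\ln\varepsilon^{-1},\,2\tau\varepsilon^{\tau}\ln\varepsilon^{-1})$ and the second at most $\ln(1+2\zeta(\alpha\tau))\,\min(s^{1-\sigma},\,s\,\varepsilon^{\tau})$; as $s+\varepsilon^{-1}\to\infty$ one of $s\to\infty$, $\varepsilon^{-1}\to\infty$ holds along any subsequence, and in each case both minima tend to $0$ — here $\sigma>1$ is used. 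This bound uses only $\gamma_j\le1$, hence holds for all admissible weights, in particular for $\gamma_j\equiv1$ where WT itself fails. I expect the genuine work of the theorem to be concentrated in the upper bound of item (4): one must keep the degree of the polynomial dependence on $s$ at $(1+o(1))\,2\ln\varepsilon^{-1}/\ln\bsgamma_I^{-1}$ while losing only $o(\ln\varepsilon^{-1})$ from the coordinates with $|k_j|\ge2$, so that the two contributions $2/\alpha$ and $2/\ln\bsgamma_I^{-1}$ combine as a maximum and not as a sum.
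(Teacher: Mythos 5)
Your treatment of items 1, 2, 3 and 5 is sound, though it takes partly different routes from the paper: for item 2 you deduce $s_{\bsgamma}<\infty$ from the eigenvalue decay forced by polynomial tractability rather than from the paper's combinatorial argument (counting the ${s \choose q+1}$ vectors with entries in $\{0,1\}$), for the sufficiency in item 3 you count $\cA(\varepsilon,s)$ directly by stratifying over the at most $O(\ln\varepsilon^{-1})$ active coordinates beyond $j_0$ instead of invoking the quasi-polynomial tractability criterion of \cite[Theorem~23.2]{NW12} used in the paper (condition \eqref{condQPT}, with eigenvalues raised to the power $\tau(1+\ln s)$), and for item 5 you give a direct Chebyshev bound using only $\gamma_j\le 1$, whereas the paper deduces the statement from the corresponding $\Lambda^{{\rm std}}$ result of \cite{KSW06,KLP18}. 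Your lower bounds in item 4 (the $s=1$ instance forcing $t^{\ast}\ge 2/\alpha$, and the $2^m{s\choose m}$ count of short vectors forcing $t^{\ast}\ge 2/\ln\bsgamma_I^{-1}$ when $\bsgamma_I>0$) are correct and parallel to the paper's.

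The genuine gap is the upper bound $t^{\ast}(\Lambda^{{\rm all}})\le 2\max(1/\alpha,1/\ln\bsgamma_I^{-1})$ in item 4, which you only sketch and explicitly defer (``I expect the genuine work \dots to be concentrated in the upper bound of item (4)''). Your item-3 stratified bound, as written, yields an exponent of roughly $2\tau+2/\ln\gamma_\ast^{-1}$, a sum rather than a maximum, and the proposed repair via the divisor estimate $\#\{\bsl\in\NN^m:\prod_i l_i\le R\}\le R(1+\ln R)^{m-1}$ does not obviously close this: the sign factors $2^m$ and, more seriously, the polylogarithmic factor contribute $\Theta(\ln\varepsilon^{-1})$ respectively $\Theta(\ln\varepsilon^{-1}\,\ln\ln\varepsilon^{-1})$ to $\ln n$, and in the regime $\ln s\asymp\ln\ln\varepsilon^{-1}$ these terms are comparable to the entire budget $t(1+\ln s)(1+\ln\varepsilon^{-1})$, so without a careful charging argument (absorbing such terms into $\delta\,(1+\ln s)(1+\ln\varepsilon^{-1})$ for arbitrarily small $\delta$, treating bounded $s$ separately) the constant you obtain strictly exceeds the claimed $2\max(1/\alpha,1/\ln\bsgamma_I^{-1})$. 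The paper avoids this delicate bookkeeping entirely: \cite[Theorem~23.2]{NW12} characterizes $t^{\ast}$ as twice the infimum of the $\tau$ for which \eqref{condQPT} holds, and the computation already done for item 3 shows \eqref{condQPT} holds for every $\tau>\max(1/\alpha,(\ln\bsgamma_I^{-1})^{-1})$ — there the factor $\gamma_j^{\tau(1+\ln s)}=\gamma_j^{\tau}s^{-\tau\ln\gamma_j^{-1}}$ and the normalization $s^{-2}$ do exactly the work your sketch leaves open. To complete your blind proof you would either have to import that criterion or carry out the sharpened direct count in full, including the charging of all lower-order terms; as it stands, item 4's upper bound is not proved.
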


\begin{remark}\rm
	We remark that in \cite{NW08} a different formulation of the necessary and sufficient condition for weak tractability is given. In particular, according to \cite[Theorem 5.8]{NW08} the approximation problem $\APP = (\APP_s)_{s \ge 1}$ for $\Lambda^{{\rm all}}$ is weakly tractable if and only if
	\begin{equation} \label{eq:WT_alt_cond}
		\lim_{s + \varepsilon^{-1} \to \infty} \frac{k(\varepsilon,s,\bsgamma)}{s + \varepsilon^{-1}}
		=
		0
		,
	\end{equation}
	where $k(\varepsilon,s,\bsgamma)$ is defined as the element $k \in \{1,\ldots ,s\}$ such that
	\begin{equation*}
		\prod_{j=1}^{k} \gamma_j > \varepsilon^2 \quad \text{and} \quad \prod_{j=1}^{k+1} \gamma_j \le \varepsilon^2
		.
	\end{equation*}
	If such a $k$ does not exist, we set $k(\varepsilon,s,\bsgamma) = s$. In the following, we show that this condition is equivalent to our condition that $\bsgamma_I <1$.
	
	Assume that $\bsgamma_I < 1$. Hence there exists an index $j_0 \in \NN$ such that $\gamma_{j_0} =: \gamma_\ast<1$
	and we see that for $k \ge j_0$ we have
	\begin{equation*}
		\prod_{j=1}^{k+1} \gamma_j \le \prod_{j=j_0}^{k+1} \gamma_j \le \prod_{j=j_0}^{k+1} \gamma_\ast = \gamma_\ast^{k-j_0+2}
		.
	\end{equation*}
	For given $\varepsilon > 0$, denote by $k_{\ast}$ the smallest positive integer such that $\gamma_\ast^{k_*-j_0+2} \le \varepsilon^2$. Elementary transformations show that this inequality  is equivalent to 
	\begin{equation*}
		k_* \ge \frac{2 \ln \varepsilon^{-1}}{\ln \gamma_\ast^{-1}} + j_0 - 2
		,
	\end{equation*}
	where here we used that $\gamma_{\ast} <1$. This however implies that $$k(\varepsilon,s,\bsgamma) \le \left\lceil \frac{2 \ln \varepsilon^{-1}}{\ln \gamma_\ast^{-1}} + j_0 - 2\right\rceil.$$
	Therefore, we obtain that
	\begin{equation*}
		\lim_{s + \varepsilon^{-1} \to \infty} \frac{k(\varepsilon,s,\bsgamma)}{s + \varepsilon^{-1}}
		\le
		\lim_{s + \varepsilon^{-1} \to \infty} \frac{\frac{2 \ln \varepsilon^{-1}}{\ln \gamma_\ast^{-1}} + j_0 - 1}{s + \varepsilon^{-1}}
		=
		0
	\end{equation*}
	and thus the condition in \eqref{eq:WT_alt_cond} is satisfied. 

	On the other hand, assume that \eqref{eq:WT_alt_cond} is satisfied but $\gamma_j=1$ for all $j \in \NN$. Then, according to the definition we obviously have that $k(\varepsilon,s,\bsgamma) = s$ for all $\varepsilon \in (0,1)$. But then, we have for fixed $\varepsilon \in (0,1)$ that 
	\begin{equation*}
		\lim_{s \to \infty} \frac{k(\varepsilon,s,\bsgamma)}{s + \varepsilon^{-1}}
		=
		\lim_{s  \to \infty} \frac{s}{s + \varepsilon^{-1}}
		= 
		1
	\end{equation*}
	and this contradicts \eqref{eq:WT_alt_cond}. Hence the $\gamma_j$ have to become eventually less than~$1$, which implies that 
	$\bsgamma_I = \inf_{j \ge 1} \gamma_j < 1$. \qed
\end{remark}

In the next theorem we present the respective conditions for tractability of approximation in the weighted Korobov space for the information class $\Lambda^{{\rm std}}$. In order to provide a detailed overview, we also include the already known results for (strong) polynomial tractability, see, e.g., \cite{NSW04}.

\begin{theorem}\label{thm_std}
	Consider multivariate approximation $\APP = (\APP_s)_{s \ge 1}$ for the information class $\Lambda^{{\rm std}}$ and $\alpha>1$. 
	Then we have the following conditions:
	\begin{enumerate}
		\item (Cf.~\cite{NSW04}) Strong polynomial tractability for the class $\Lambda^{{\rm std}}$ holds if and only if
		\begin{equation*}
			\sum_{j =1}^{\infty} \gamma_j < \infty
		\end{equation*}
		(which implies $s_{\bsgamma} \le 1$). In this case the exponent of strong polynomial tractability satisfies 
		\begin{equation*}
			\tau^{\ast}(\Lambda^{\mathrm{std}}) =2 \max\left(s_{\bsgamma},\frac{1}{\alpha} \right)
			.
		\end{equation*}
		\item (Cf.~\cite{NSW04}) Polynomial tractability for the class $\Lambda^{{\rm std}}$ holds if and only if
		\begin{equation*}	
			\limsup_{s \to \infty} \frac1{\ln s} \sum_{j=1}^s \gamma_j < \infty
			.
		\end{equation*}
		\item Polynomial and quasi-polynomial tractability for the class $\Lambda^{{\rm std}}$ are equivalent.
		\item  Weak tractability for the class $\Lambda^{{\rm std}}$ holds if and only if
		\begin{equation}\label{cond_wt_std}
			\lim_{s \to \infty} \frac1{s} \sum_{j=1}^s \gamma_j = 0
			.
		\end{equation}
		\item For $\sigma \in (0,1]$, weak $(\sigma,\tau)$-tractability for the class $\Lambda^{\rm{std}}$ holds if and only if
		\begin{equation}\label{cond_tswt_std}
			\lim_{s \to \infty} \frac1{s^\sigma} \sum_{j=1}^s \gamma_j = 0
			.
		\end{equation}
		For $\sigma >1$, weak $(\sigma,\tau)$-tractability for the class $\Lambda^{\rm{std}}$ holds for all weights $1 \ge \gamma_1 \ge \gamma_2 \ge \dots \ge 0$.
		\item Uniform weak tractability for the class $\Lambda^{\rm{std}}$ holds if and only if
		\begin{equation}\label{cond_uwt_std}
			\lim_{s \to \infty} \frac1{s^\sigma} \sum_{j=1}^s \gamma_j = 0
			\quad \text{for all } \sigma \in (0,1]
			.
		\end{equation}
	\end{enumerate}
\end{theorem}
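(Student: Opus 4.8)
The plan is to deduce this characterization directly from part~(5) of Theorem~\ref{thm_std} together with the definition of uniform weak tractability, so that item~(6) becomes essentially a corollary. Recall that UWT means that $(\sigma,\tau)$-WT holds for \emph{every} pair $(\sigma,\tau)\in(0,\infty)^2$. By the second part of~(5), for every $\sigma>1$ the problem is $(\sigma,\tau)$-weakly tractable irrespective of the weights, so those pairs impose no restriction. Hence UWT is equivalent to $(\sigma,\tau)$-WT holding for all $\sigma\in(0,1]$ and all $\tau>0$.

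Next I would invoke the first part of~(5): for a fixed $\sigma\in(0,1]$ and a fixed $\tau>0$, $(\sigma,\tau)$-WT holds if and only if $\lim_{s\to\infty}\tfrac1{s^\sigma}\sum_{j=1}^s\gamma_j=0$. The key observation is that this condition does not involve $\tau$; therefore, for a fixed $\sigma\in(0,1]$, $(\sigma,\tau)$-WT holds for one value of $\tau$ if and only if it holds for all values of $\tau$, and this happens if and only if the displayed limit vanishes. Intersecting these equivalences over all $\sigma\in(0,1]$ shows that UWT holds if and only if $\lim_{s\to\infty} s^{-\sigma}\sum_{j=1}^s\gamma_j=0$ for every $\sigma\in(0,1]$, which is exactly condition~\eqref{cond_uwt_std}.

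Since the statement reduces to part~(5) in this way, there is no genuine obstacle beyond making the quantifier bookkeeping explicit; the substantive work lies entirely in establishing~(5). Should one prefer a self-contained argument, the sufficiency direction would reuse the explicit algorithm and the information-complexity estimate from the proof of~(5), checking that the bound degrades in a controlled way as $\sigma\downarrow0$, while the necessity direction is immediate because UWT trivially implies $(\sigma,\tau)$-WT for each individual $\sigma\in(0,1]$, to which the necessity half of~(5) applies.
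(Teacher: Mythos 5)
Your reduction of item~(6) to item~(5) is logically correct: since the characterization in~(5) for $\sigma\in(0,1]$ does not involve $\tau$, and since $(\sigma,\tau)$-WT holds unconditionally for $\sigma>1$, uniform weak tractability is indeed equivalent to condition~\eqref{cond_uwt_std}. This is also consistent with how the paper handles it, since there items~4--6 are proved in one combined argument and the UWT statement falls out of the $(\sigma,\tau)$-WT characterization for free.

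However, as a proof of Theorem~\ref{thm_std} the proposal has a genuine gap: it proves nothing beyond this quantifier bookkeeping. Items~(3), (4) and the first part of~(5) are precisely the new substantive content of the theorem, and you defer all of them (``the substantive work lies entirely in establishing~(5)''), even appealing to ``the explicit algorithm and the information-complexity estimate from the proof of~(5)'' which you never supply --- that is circular. Concretely, what is missing is: (i) the equivalence of PT and QPT in item~(3), which the paper obtains by passing to the integration problem and applying the $T$-tractability criterion of \cite[Theorem~16.16]{NW10} with $T(\varepsilon^{-1},s)=\exp((1+\ln s)(1+\ln \varepsilon^{-1}))$, yielding $\limsup_{s\to\infty}\frac{1}{\ln s}\sum_{j=1}^s\gamma_j<\infty$ and hence PT by item~(2); (ii) the sufficiency of the weight conditions in items~(4)--(5), for which the paper notes that the condition forces the weights to become eventually less than $1$, so QPT (hence UWT) holds for $\Lambda^{\rm all}$ by Theorem~\ref{thm_all}, and combines this with $\ln(\tra(W_s))/s^{\sigma}\to 0$ and the $\Lambda^{\rm all}$-to-$\Lambda^{\rm std}$ transfer argument of \cite[Theorem~26.11]{NW12}; (iii) the unconditional $(\sigma,\tau)$-WT for $\sigma>1$, which rests on the bound $n(\varepsilon,\APP_s;\Lambda^{\rm std})\le C\,\varepsilon^{-\eta}K^s$ from \cite{KSW06,KLP18}; and (iv) the necessity direction, which requires the reduction to integration, the result of \cite{W09} for WT, and for $(\sigma,\tau)$-WT and UWT the lower bound for integration in the associated weighted Sobolev space via \cite[Theorem~4.2]{HW2001} and \cite[Theorem~4]{NW2001}, giving $\ln n(\varepsilon,{\rm INT}_s)\gtrsim \sum_{j=j_0}^s\widehat{\gamma}_j$ and hence $\lim_{s\to\infty}s^{-\sigma}\sum_{j=1}^s\gamma_j=0$. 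Without these arguments the theorem is not proved; only the easy implication (6)$\Leftarrow$(5) is.
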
 

The proofs of the statements in Theorems~\ref{thm_all} and \ref{thm_std} are given in the next section. \\

The results in Theorems~\ref{thm_all} and~\ref{thm_std} provide a complete characterization for tractability of approximation in the weighted Korobov space $\cH_{s,\alpha,\bsgamma}$ with respect to all commonly studied notions of tractability and the two information classes $\Lambda^{{\rm all}}$ and $\Lambda^{{\rm std}}$. We summarize the conditions in a concise table (Table \ref{tab:tract_overview}) below.

\setlength{\tabcolsep}{9pt}
\setlength{\arrayrulewidth}{0.7pt}

\begin{table}[H]
	\centering
	\begin{tabular}{c||c|c}
		& $\Lambda^{{\rm all}}$ & $\Lambda^{{\rm std}}$ \Tstrut\Bstrut \\ 
		\hline
		{\rm SPT} & $s_{\bsgamma} < \infty$ & $\sum_{j=1}^{\infty} \gamma_j < \infty$ \Tstrut\Bstrut \\[0.65em]
		{\rm PT}   & $s_{\bsgamma} < \infty$ & $\limsup_{s \rightarrow \infty} \frac{1}{\ln s}\sum_{j=1}^s \gamma_j < \infty$ \\[0.65em]
		{\rm QPT} & $ \bsgamma_I < 1$ & $\limsup_{s \rightarrow \infty} \frac{1}{\ln s}\sum_{j=1}^s \gamma_j < \infty$ \\[0.65em]
		{\rm UWT} & $ \bsgamma_I < 1$ & $\lim_{s \rightarrow \infty} \frac{1}{s^{\sigma}}\sum_{j=1}^s \gamma_j = 0 \ \forall  \sigma \in (0,1]$ \\[0.65em]
		$(\sigma,\tau)\mbox{-WT}$ for $\sigma \in (0,1]$ & $ \bsgamma_I < 1$ &  $\lim_{s \rightarrow \infty} \frac{1}{s^{\sigma}}\sum_{j=1}^s 
		\gamma_j = 0$ \\[0.65em]
		\mbox{WT} & $ \bsgamma_I < 1$ & $\lim_{s \rightarrow \infty} \frac{1}{s}\sum_{j=1}^s \gamma_j = 0$ \\[0.65em]
		$(\sigma,\tau)\mbox{-WT}$ for $\sigma>1$ & no extra condition on $\bsgamma$ &  no extra condition on $\bsgamma$ \\	\end{tabular}
	\vspace{7pt}
	\caption{\label{tab:tract_overview} Overview of the conditions for tractability of approximation in $\cH_{s,\alpha,\bsgamma}$ for product weights satisfying $1 \ge \gamma_1 \ge \gamma_2 \ge \dots \ge 0$.}
\end{table}

\section{The proofs}\label{sec:proofs}

In this section we present the proofs of Theorem~\ref{thm_all} and Theorem~\ref{thm_std}.

\subsubsection*{The information class $\Lambda^{{\rm all}}$}

It is commonly known (see \cite[Section~4.2.3]{NW08} or \cite[Chapter~4, Section~5.8]{TWW}) that the $n$-th minimal worst-case errors $e(n,\APP_s;\Lambda)$ are directly related to the eigenvalues of the self-adjoint operator 
\begin{equation*}
	W_s 
	:= 
	\APP_s^\ast \APP_s: \cH_{s,\alpha,\bsgamma} \to \cH_{s,\alpha,\bsgamma}
	.
\end{equation*}
In the following lemma, we derive the eigenpairs of the operator $W_s$. For this purpose, we define, for $\bsx \in [0,1]^s, \bsk \in \ZZ^s$, the vectors
$e_{\bsk}(\bsx) = e_{\bsk,\alpha,\bsgamma} (\bsx):= \sqrt{r_{s,\alpha,\bsgamma}(\bsk)} \, \exp(2 \pi \icomp \bsk\cdot \bsx)$. 

\begin{lemma} \label{lemma:eigenval_W_s}
The eigenpairs of the operator $W_s$ are $(r_{s,\alpha,\bsgamma}(\bsk), e_{\bsk})$ with $\bsk \in \ZZ^s$.
\end{lemma}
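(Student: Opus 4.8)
The plan is to verify directly that each $e_{\bsk}$ is an eigenfunction of $W_s = \APP_s^\ast \APP_s$ with the claimed eigenvalue, and then to argue that these exhaust the spectrum. First I would record two elementary facts about the functions $e_{\bsk}$: they form a system in $\cH_{s,\alpha,\bsgamma}$ whose Fourier coefficient of index $\bsl$ is $\sqrt{r_{s,\alpha,\bsgamma}(\bsk)}\,\delta_{\bsk,\bsl}$, so that $\langle e_{\bsk}, e_{\bsl}\rangle_{s,\alpha,\bsgamma} = \delta_{\bsk,\bsl}$ by the definition of the inner product; hence $\{e_{\bsk} : \bsk \in \ZZ^s\}$ is an orthonormal system in $\cH_{s,\alpha,\bsgamma}$. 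It is in fact complete, since any $f$ with all Fourier coefficients zero is the zero function and $\widehat{f}(\bsk) = \sqrt{r_{s,\alpha,\bsgamma}(\bsk)}\,\langle f, e_{\bsk}\rangle_{s,\alpha,\bsgamma}$.

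Next I would compute $\APP_s^\ast$. For $g \in L_2([0,1]^s)$ and $f \in \cH_{s,\alpha,\bsgamma}$, the identity $\langle \APP_s f, g\rangle_{L_2} = \langle f, \APP_s^\ast g\rangle_{s,\alpha,\bsgamma}$ together with Parseval's identity in $L_2$ gives $\sum_{\bsk} \widehat{f}(\bsk)\,\overline{\widehat{g}(\bsk)} = \sum_{\bsk} \frac{1}{r_{s,\alpha,\bsgamma}(\bsk)} \widehat{f}(\bsk)\,\overline{\widehat{\APP_s^\ast g}(\bsk)}$, from which one reads off $\widehat{\APP_s^\ast g}(\bsk) = r_{s,\alpha,\bsgamma}(\bsk)\,\widehat{g}(\bsk)$. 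Applying this with $g = \APP_s e_{\bsk}$, whose $\bsl$-th Fourier coefficient is again $\sqrt{r_{s,\alpha,\bsgamma}(\bsk)}\,\delta_{\bsk,\bsl}$, yields that $W_s e_{\bsk} = \APP_s^\ast \APP_s e_{\bsk}$ has $\bsl$-th Fourier coefficient $r_{s,\alpha,\bsgamma}(\bsl)\sqrt{r_{s,\alpha,\bsgamma}(\bsk)}\,\delta_{\bsk,\bsl} = r_{s,\alpha,\bsgamma}(\bsk)\sqrt{r_{s,\alpha,\bsgamma}(\bsk)}\,\delta_{\bsk,\bsl}$, which are exactly the Fourier coefficients of $r_{s,\alpha,\bsgamma}(\bsk)\,e_{\bsk}$. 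Hence $W_s e_{\bsk} = r_{s,\alpha,\bsgamma}(\bsk)\,e_{\bsk}$, so $(r_{s,\alpha,\bsgamma}(\bsk), e_{\bsk})$ is an eigenpair.

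Finally I would note that since $\{e_{\bsk}\}$ is a complete orthonormal system of eigenvectors of the compact self-adjoint operator $W_s$, there can be no further eigenpairs up to the usual ambiguity (an eigenvalue may be repeated when different $\bsk$ give the same $r_{s,\alpha,\bsgamma}(\bsk)$, and the eigenspaces are the closed spans of the corresponding $e_{\bsk}$'s); thus the list $(r_{s,\alpha,\bsgamma}(\bsk), e_{\bsk})$, $\bsk \in \ZZ^s$, is the complete set of eigenpairs. I do not expect a serious obstacle here; the one point that needs a little care is the justification of the formula $\widehat{\APP_s^\ast g}(\bsk) = r_{s,\alpha,\bsgamma}(\bsk)\,\widehat{g}(\bsk)$, i.e.\ that $\APP_s^\ast g$ indeed lies in $\cH_{s,\alpha,\bsgamma}$ and that the interchange of summation implicit in the Parseval computation is legitimate, which follows from $\alpha > 1$ and compactness of $\APP_s$ as recalled from \cite{NSW04}.
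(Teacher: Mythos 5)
Your argument is correct and follows essentially the same route as the paper's: a direct verification using that $\{e_{\bsk}\}_{\bsk\in\ZZ^s}$ is an orthonormal basis of $\cH_{s,\alpha,\bsgamma}$ and the adjoint relation $\langle \APP_s f,\APP_s g\rangle_{L_2}=\langle f,W_s g\rangle_{s,\alpha,\bsgamma}$, carried out on Fourier coefficients. The only cosmetic difference is that you compute $\APP_s^\ast$ explicitly via $\widehat{\APP_s^\ast g}(\bsk)=r_{s,\alpha,\bsgamma}(\bsk)\,\widehat{g}(\bsk)$, whereas the paper reads off the matrix elements $\langle e_{\bsk},W_s e_{\bsh}\rangle_{s,\alpha,\bsgamma}=\langle e_{\bsk},e_{\bsh}\rangle_{L_2}$ and expands $W_s e_{\bsh}$ in the basis; your added remark on completeness of the eigenpair list is a welcome, if routine, supplement.
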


This result is well known; see, e.g., \cite[p.~215]{NW08}. %For the sake of completeness we add the following short proof.

\iffalse
\begin{proof}[Proof of Lemma~\ref{lemma:eigenval_W_s}]
	We find that for any $f,g \in \cH_{s,\alpha,\bsgamma}$ we have
	\begin{equation*}
		\langle \APP_s(f), \APP_s(g) \rangle_{L_2}
		=
		\langle f, \APP_s^\ast \APP_s(g) \rangle_{s,\alpha,\bsgamma}
		=
		\langle f, W_s(g) \rangle_{s,\alpha,\bsgamma}
	\end{equation*}
	and hence, due to the orthonormality of the Fourier basis functions,
	\begin{align*}
		\langle e_{\bsk}, W_s(e_{\bsh}) \rangle_{s,\alpha,\bsgamma}
		&=
		\langle e_{\bsk}, e_{\bsh} \rangle_{L_2}
		=
		\sqrt{r_{s,\alpha,\bsgamma}(\bsk) \, r_{s,\alpha,\bsgamma}(\bsh)} \, \delta_{\bsk,\bsh}
		,
	\end{align*}
	where the Kronecker delta $\delta_{\bsk,\bsh}$ is 1 if $\bsk=\bsh$, and 0 otherwise. For $\bsk=\bsh$ this gives $\langle e_{\bsk}, W_s(e_{\bsk}) \rangle_{s,\alpha,\bsgamma} = r_{s,\alpha,\bsgamma}(\bsk)$ which in turn implies that	
	\begin{equation*}
		W_s(e_{\bsh})
		=
		\sum_{\bsk \in \ZZ^s} \langle W_s(e_{\bsh}), e_{\bsk} \rangle_{s,\alpha,\bsgamma} \, e_{\bsk}
		=
		r_{s,\alpha,\bsgamma}(\bsh) \, e_{\bsh}
	\end{equation*}
	and proves the lemma.
\end{proof}
\fi

In order to exploit the relationship between the eigenvalues of $W_s$ and the information complexity, we define the set
\begin{equation*}
	\cA(\varepsilon, s)
	:=
	\{ \bsk \in \ZZ^s \ : \ r_{s,\alpha,\bsgamma}(\bsk) > \varepsilon^2 \}
	.
\end{equation*}
It is commonly known (see \cite{NW08}) that then the following identity holds
\begin{equation*}
	n(\varepsilon,\APP_s; \Lambda^{\mathrm{all}})
	=
	|\cA(\varepsilon, s)|
	.
\end{equation*}
We will use this fact also in the proof of Theorem~\ref{thm_all}, which is presented below.

\begin{proof}[Proof of Theorem~\ref{thm_all}]
	We prove the necessary and sufficient conditions for each of the listed notions of tractability. Items 1 and 2 of Theorem~\ref{thm_all} are known from very general results in \cite{WW99}. Since their direct proofs are easy for the considered instance, we include the proofs for these two parts as a warm-up.
	\begin{enumerate} 
	\item In order to give a necessary and sufficient condition for strong polynomial tractability for $\Lambda^{\rm all}$, we use a criterion from \cite[Section~5.1]{NW08}. From \cite[Theorem 5.2]{NW08} we find that the problem $\APP$ is strongly polynomially tractable for 
	$\Lambda^{\rm all}$ if and only if there exists a $\tau>0$ such that 
	\begin{equation}\label{critNW08} 
		\sup_{s \in \NN} \left(\sum_{\bsk \in \ZZ^s} (r_{s,\alpha,\bsgamma}(\bsk))^\tau \right)^{1/\tau} < \infty
	\end{equation}
	and then $\tau^\ast(\Lambda^{\mathrm{all}})=\inf\{2 \tau \ : \ \tau \text{ satisfies \eqref{critNW08}}\}.$
	
	Assume that $s_{\bsgamma} < \infty$. Then take $\tau$ such that $\tau > \max(s_{\bsgamma},\tfrac{1}{\alpha})$ and thus $\sum_{j=1}^\infty \gamma_j^\tau$ is finite. For the sum in \eqref{critNW08} we then obtain
	\begin{align}\label{su_zeile1}
		 \sum_{\bsk \in \ZZ^s} (r_{s,\alpha,\bsgamma}(\bsk))^{\tau}
		 &= 
		 \prod_{j=1}^{s}\left(\sum_{k=-\infty}^\infty (r_{\alpha,\gamma_j}(k))^\tau \right)\nonumber\\
		 &= 
		 \prod_{j=1}^{s}\left( 1 + 2 \gamma_j^\tau \sum_{k=1}^\infty \frac{1}{k^{\alpha \tau}} \right)\nonumber  \\
		 &= 
		 \prod_{j=1}^{s}\left(1 +  2 \zeta(\alpha \tau)\gamma_j^\tau \right)\\
		 &\le 
		 \exp\left( 2 \zeta(\alpha \tau) \sum_{j=1}^\infty \gamma_j^\tau\right)
		 < 
		 \infty
		 ,\nonumber
	\end{align}
	where we also used that $\tau > 1/\alpha$ and hence $\zeta(\alpha \tau)<\infty$. This implies that we have strong polynomial tractability and that 
	\begin{equation} \label{tstup}
		\tau^\ast(\Lambda^{\mathrm{all}}) \le 2 \max(s_{\bsgamma},\tfrac{1}{\alpha})
		.
	\end{equation}
	
	On the other hand, assume we have strong polynomial tractability. Then there exists a finite $\tau$ such that \eqref{critNW08} holds true. From \eqref{su_zeile1} we see that we obviously require that $\tau > \tfrac{1}{\alpha}$. Then, again using \eqref{su_zeile1}, we obtain that
	\begin{equation*}
		\sum_{\bsk \in \ZZ^s} (r_{s,\alpha,\bsgamma}(\bsk))^\tau 
		=
		\prod_{j=1}^s (1 + 2 \zeta(\alpha \tau) \gamma_j^\tau) 
		\ge
		2 \zeta(\alpha \tau) \sum_{j=1}^s \gamma_j^\tau
		.
	\end{equation*}
	Again, since \eqref{critNW08} holds true, we require that $\sum_{j=1}^\infty \gamma_j^\tau<\infty$ and hence $s_{\bsgamma}\le \tau < \infty$. 
	Combining both results yields that $\tau \ge \max(s_{\bsgamma},\tfrac{1}{\alpha})$ and hence also 
	\begin{equation}\label{tstdn}
		\tau^\ast(\Lambda^{\mathrm{all}}) 
		\ge 
		2 \max(s_{\bsgamma},\tfrac{1}{\alpha})
		.
	\end{equation}
	Equations \eqref{tstup} and \eqref{tstdn} then imply that $\tau^\ast(\Lambda^{\mathrm{all}}) = 2 \max(s_{\bsgamma},\tfrac{1}{\alpha})$.
	
	\item We use ideas from \cite{WW99}. In order to prove the equivalence of strong polynomial tractability and polynomial tractability it suffices to prove that polynomial tractability implies strong polynomial tractability. So let us assume that $\APP$ is polynomially tractable, i.e., there exist numbers $C,p>0$ and $q \ge 0$ such that
	\begin{equation*}
		n(\varepsilon,\APP_s; \Lambda^{{\rm all}}) 
		\le 
		C \,s^q \, \varepsilon^{-p} 
		\quad \mbox{for all $\varepsilon \in (0,1)$ and $s \in \NN$}
		.
	\end{equation*}
	Without loss of generality we may assume that $q$ is an integer. Take $s \in \NN$ such that $s \ge q+1$ and choose vectors $\bsk \in \ZZ^s$ with $s-q-1$ components equal to $0$ and $q+1$ components equal to $1$. The total number of such vectors is ${s \choose q+1}$. Now choose $\varepsilon_*=\frac{1}{2} \gamma_s^{(q+1)/2}$. Assume that $\bsk \in \ZZ^s$ is of the form as mentioned above and denote by 
	$\uu \subseteq \{1,\ldots,s\}$ the set of indices of $\bsk$ which are equal to $1$. Then we have 
	\begin{equation*}
		r_{s,\alpha,\bsgamma}(\bsk) 
		=
		\prod_{j \in \uu} \gamma_j \ge \gamma_s^{q+1} 
		> 
		\varepsilon_*^2
		.
	\end{equation*}
	Hence all the ${s \choose q+1}$ vectors $\bsk$ of the form mentioned above belong to $\cA(\varepsilon_*,s)$ and this implies that
	\begin{equation*}
		|\cA(\varepsilon_*,s)| 
		\ge 
		{s \choose q+1} 
		\ge 
		\frac{(s-q)^{q+1}}{(q+1)!} 
		\ge 
		\frac{s^{q+1}}{(q+1)! (q+1)^{q+1}}
		=: 
		s^{q+1} c_q
		.
	\end{equation*}
	This now yields
	\begin{equation*}
		s^{q+1} c_q \le |\cA(\varepsilon_*,s)| 
		= 
		n(\varepsilon_*,\APP_s; \Lambda^{{\rm all}})
		\le 
		C \,s^q\, \varepsilon_*^{-p} 
		= 
		2^p\, C\, s^q\, \gamma_s^{-(q+1)p/2}
		, 
	\end{equation*}
	which in turn implies that there exists a positive number $\widetilde{c}_{p,q}$ such that
	\begin{equation*}
		\gamma_s \le \frac{\widetilde{c}_{p,q}}{s^{2/((q+1)p)}}
		.
	\end{equation*}
	This estimate holds for all $s \ge q+1$. Hence the sum exponent $s_{\bsgamma}$ of the sequence $\bsgamma=(\gamma_j)_{j \ge 1}$ is finite, $s_{\bsgamma} < \infty$, and this implies by the first statement that we have strong polynomial tractability.
	
	\item We use the following criterion for QPT, taken from \cite[Theorem~23.2]{NW12} (see also \cite{KW19}), which states that QPT holds if and only if there exists a $\tau>0$ such that
	\begin{equation} \label{condQPT}
		C
		:=
		\sup_{s \in \NN} \frac{1}{s^2} \left(\sum_{j=1}^{\infty} \lambda_{s,j}^{\tau(1+\ln s)} \right)^{1/\tau} 
		< 
		\infty,
	\end{equation}
	where $\lambda_{s,j}$ is the $j$-th eigenvalue of the operator $W_s$  in non-increasing order. 
	
	Assume that $\bsgamma_I <1$. For the weighted Korobov space $\cH_{s,\alpha,\bsgamma}$ we have by Lemma~\ref{lemma:eigenval_W_s} that
	\begin{align*}
		\sum_{j=1}^{\infty} \lambda_{s,j}^{\tau(1+\ln s)}
		&=
		\sum_{\bsk \in \ZZ^s} (r_{s,\alpha,\bsgamma}(\bsk))^{\tau (1 + \ln s)}
		\\
		&=
		\prod_{j=1}^s \left( 1 + 2 \sum_{k=1}^\infty (r_{\alpha,\gamma_j}(k))^{\tau (1 + \ln s)} \right)
		\\
		&=
		\prod_{j=1}^s \left( 1 + 2 \zeta(\alpha \tau (1 + \ln s)) \gamma_j^{\tau (1 + \ln s)}\right)
		.
	\end{align*}
	In order that $\zeta_s:=\zeta(\alpha \tau (1 + \ln s)) < \infty$ for all $s \in \NN$, we need to require from now on that $\tau>1/\alpha$.
	Furthermore, we have that
	\begin{align*}
		\frac{1}{s^2} \left(\sum_{j=1}^{\infty} \lambda_{s,j}^{\tau(1+\ln s)} \right)^{1/\tau}
		&=
		\frac{1}{s^2} \left( \prod_{j=1}^s \left( 1 + 2 \zeta_s \gamma_j^{\tau (1 + \ln s)} \right) \right)^{1/\tau} \\
		&=
		\exp\left( \frac1{\tau} \sum_{j=1}^s \ln\left( 1 +2 \zeta_s \gamma_j^{\tau (1 + \ln s)} \right) - 2 \ln s \right) \\
	 	&\le
	 	\exp\left( \frac1{\tau} \, 2 \zeta_s \sum_{j=1}^s \gamma_j^{\tau (1 + \ln s)} - 2 \ln s \right)
	 	,
	\end{align*}
	where we used that $\ln(1+x) \le x$ for all $x \ge 0$. Now we use the well-known fact that $\zeta(x) \le 1 + \frac1{x-1}$ for all $x>1$ and thus
	\begin{equation*}
		\zeta_s \le 1 + \frac1{(\alpha \tau - 1) + \alpha \tau \ln s}
		.
	\end{equation*}
	Then we obtain
	\begin{align*}
		\frac{1}{s^2} \left(\sum_{j=1}^{\infty} \lambda_{s,j}^{\tau(1+\ln s)} \right)^{1/\tau} \!\!\! 
		&\le
		\exp\left( \frac2{\tau} \, \left(1 + \frac1{(\alpha \tau - 1) + \alpha \tau \ln s} \right) \sum_{j=1}^s \gamma_j^{\tau (1 + \ln s)} - 2 \ln s \right)
		.
	\end{align*}
	Next, we consider two cases:
	\begin{itemize} 
	\item Case $\bsgamma_I=0$: Then $\lim_{j \to \infty}\gamma_j=0$ and hence for every $\varepsilon>0$ there exists a positive integer $J=J(\varepsilon)$ such that $\gamma_J \le \varepsilon$. Then, we have that 
	\begin{eqnarray*}
	 \sum_{j=1}^s \gamma_j^{\tau (1 + \ln s)} & \le & \sum_{j=1}^{J-1} 1+ \sum_{j =J}^s \varepsilon^{\tau \ln s} \le J-1+ s^{1-\tau \ln \varepsilon^{-1}}
	\end{eqnarray*}
	such that choosing $\varepsilon = \exp(-1/\tau)$ yields that
	\begin{equation*}
		\sum_{j=1}^s \gamma_j^{\tau (1 + \ln s)} 
		\le 
		J
		.
	\end{equation*}
	Note that for the chosen $\varepsilon$ the integer $J$ depends on $\tau$, but it is finite for every fixed $\tau$.  Thus, if $\tau > 1/\alpha$ and $\lim_{j \rightarrow \infty} \gamma_j =0$ we have 
	\begin{align*}
		\frac{1}{s^2} \left(\sum_{j=1}^{\infty} \lambda_{s,j}^{\tau(1+\ln s)} \right)^{1/\tau}
		&\le
		\exp\left( \frac2{\tau} \left(1 +  \frac1{(\alpha \tau - 1) + \alpha \tau \ln s} \right)  J - 2 \ln s \right)
		\\
		&=
		\exp(\cO(1)) 
		< 
		\infty
		,
	\end{align*}
	for all $s \in \NN$. By the characterization in \eqref{condQPT}, this implies quasi-polynomial tractability.
    \item Case $\bsgamma_I \in (0,1)$: Then, for every $\gamma_{\ast} \in (\bsgamma_I,1)$ there exists a $j_0=j_0(\gamma_{\ast}) \in \NN$ such that 
    \begin{equation*}
    	\gamma_j \le \gamma_{\ast} 
    	< 
    	1 
    	\quad \mbox{for all}\  j > j_0.
    \end{equation*}
    Hence, we obtain for every $s \in \NN$ that 
	\begin{align*}
		\sum_{j=1}^s \gamma_j^{\tau(1+\ln s)} 
		&\le
		j_0 +  \gamma_\ast^{\tau(1+ \ln s)} \max(s-j_0,0) \\
		&=
		j_0 +\frac{\gamma_\ast^{\tau} \max(s-j_0,0)}{s^{\tau \ln \gamma_\ast^{-1}}}
		\le 
		j_0+1
		,
	\end{align*}
	as long as $\tau \ge (\ln \gamma_\ast^{-1})^{-1}$. Thus, if $\tau > 1/\alpha$ and $\tau \ge (\ln \gamma_\ast^{-1})^{-1}$, then we have 
	\begin{align*}
		\frac{1}{s^2} \left(\sum_{j=1}^{\infty} \lambda_{s,j}^{\tau(1+\ln s)} \right)^{1/\tau}
		&\le
		\exp\left( \frac2{\tau} \left(1 +  \frac1{(\alpha \tau - 1) + \alpha \tau \ln s} \right)  (j_0+1) - 2 \ln s \right)
		\\
		&=
		\exp(\cO(1)) 
		< 
		\infty
		,
	\end{align*}
	for all $s \in \NN$. Again, by the characterization in \eqref{condQPT}, this implies quasi-polynomial tractability. 
    \end{itemize}
    
	Of course, quasi-polynomial tractability implies uniform weak tractability, which in turn implies weak tractability. 
	
	It remains to show that weak tractability implies $\bsgamma_I < 1$. Assume on the contrary that $\bsgamma_I=1$, i.e., $\gamma_j=1$ for all $j \in \NN$. Then we have for all $\bsk \in \{-1,0,1\}^s$ that $r_{s,\alpha,\bsgamma}(\bsk)=1$. This yields that for all $\varepsilon \in (0,1)$ we have $\{-1,0,1\}^s \subseteq \mathcal{A}(\varepsilon,s)$ and hence $n(\varepsilon,\APP_s; \Lambda^{{\rm all}}) \ge 3^s$. This means that the approximation problem suffers from the curse of dimensionality and, in particular, we cannot have weak tractability. This concludes the proof of item 3.
	
	\item Again from \cite[Theorem~23.2]{NW12} we know that the exponent of quasi-polynomial tractability is 
	\begin{equation*}
		t^{\ast}(\Lambda^{{\rm all}})
		=
		2 \inf\{\tau \ : \ \tau \text{ for which \eqref{condQPT} holds} \}
		.
	\end{equation*}
	From the above part of the proof it follows that $\tau$ satisfies \eqref{condQPT} as long as $\tau > 1/\alpha$ and $\tau > (\ln \bsgamma_I^{-1})^{-1}$, where we put $(\ln \bsgamma_I^{-1})^{-1}:=0$ whenever $\bsgamma_I=0$. Therefore, 
	\begin{equation*}
		t^{\ast}(\Lambda^{{\rm all}})
		\le 
		2 \max\left(\frac{1}{\alpha} ,\frac{1}{\ln \bsgamma_I^{-1}}\right)
		.
	\end{equation*}
	Assume now that we have quasi-polynomial tractability. Then \eqref{condQPT} holds true for some $\tau>0$. 
	Considering the special instance $s=1$, this means that
	\begin{equation*}
		C
		\ge 
		\left(\sum_{j=1}^{\infty} \lambda_{1,j}^{\tau} \right)^{1/\tau}= \left(1+2 \zeta(\alpha \tau) \gamma_1^{\tau} \right)^{1/\tau} 
	\end{equation*}
	and hence we must have $\tau>1/\alpha$. This already implies the result $t^{\ast}(\Lambda^{{\rm all}}) = \frac{2}{\alpha}$ whenever $\bsgamma_I=0$.
	
	It remains to study the case $\bsgamma_I>0$. Now, again according to \eqref{condQPT}, there exists a $\tau>1/\alpha$ such that for all $s \in \NN$ we have 
	\begin{align*}
		C  
		&\ge
		\frac{1}{s^2} \left( \prod_{j=1}^s \left(1+2 \zeta(\alpha\tau(1+\ln s))\gamma_j^{\tau(1+\ln s)} \right)\right)^{1/\tau} \\
		&\ge
		\exp\left(\frac{1}{\tau} \sum_{j=1}^s \ln\left(1 +\gamma_j^{\tau(1+\ln s)}\right) - 2 \ln s \right)
		.
	\end{align*}
	Taking the logarithm leads to 
	\begin{align*}
		\ln C 
		&\ge
		\frac{1}{\tau} \sum_{j=1}^s \ln\left(1+\gamma_j^{\tau(1+\ln s)}\right) - 2 \ln s
		\\
		&\ge
		\frac{s}{\tau} \ln\left(1+\bsgamma_I^{\tau(1+\ln s)}\right) - 2 \ln s
	\end{align*}
	for all $s \in \NN$. Since $\bsgamma_I \in (0,1)$ and since $\ln(1+x)\ge x \ln 2$ for all $x \in [0,1]$, it follows that for all $s \in \NN$ we have 
	\begin{equation*}
		\ln C 
		\ge
		 \frac{s \ln 2}{\tau} \bsgamma_I^{\tau(1+\ln s)} -2 \ln s 
		= 
		\frac{\bsgamma_I^{\tau} s \ln 2}{\tau \, s^{\tau \ln \bsgamma_I^{-1}}} - 2 \ln s
		.
	\end{equation*}
	This implies that $\tau \ge (\ln \bsgamma_I^{-1})^{-1}$. Therefore, we also have that 
	\begin{equation*}
		t^{\ast}(\Lambda^{{\rm all}})
		\ge 
		2 \max\left(\frac{1}{\alpha} , \frac{1}{\ln \bsgamma_I^{-1}}\right)
	\end{equation*}
	and the claimed result follows.
	\item The result for $(\sigma,\tau)$-weak tractability for $\sigma>1$ for the class $\Lambda^{{\rm all}}$ follows from the corresponding result for the class $\Lambda^{{\rm std}}$ from Theorem~\ref{thm_std}.
	 \qedhere
	\end{enumerate}
\end{proof}

\subsubsection*{The information class $\Lambda^{{\rm std}}$}

Below, we provide the remaining proof of Theorem~\ref{thm_std}.

\begin{proof}[Proof of Theorem~\ref{thm_std}]
	The necessary and sufficient conditions for polynomial and strong polynomial tractability (items~1~and~2) have already been proved in \cite{NSW04}.
	See also \cite[p.~215ff.]{NW08}, where the exact exponent of strong polynomial tractability $\tau^{\ast}(\Lambda^{\mathrm{std}})$ is given. We will therefore only provide proofs for items 3 to 6. 
	
	We start with a preliminary remark about the relation between integration and approximation.  It is well known that multivariate approximation is not easier than multivariate integration ${\rm INT}_s(f)=\int_{[0,1]^s} f(\bsx) \rd \bsx$ for $f \in \cH_{s,\alpha,\bsgamma}$, see, e.g., \cite{NSW04}. In particular, necessary conditions for some notion of tractability for the integration problem are also necessary for the approximation problem. We will use this basic observation later on. Now we present the proof of item 3.
	\begin{enumerate}
	\item[3.] Obviously, it suffices to prove that quasi-polynomial tractability implies polynomial tractability. Assume therefore that quasi-polynomial tractability for the class $\Lambda^{{\rm std}}$ holds for approximation. Then we also have  quasi-polynomial tractability for the integration problem. Now we apply \cite[Theorem~16.16]{NW10} which states that integration is $T$-tractable if and only if 
	\begin{equation}\label{crit_Ttract}	
		\limsup_{s+\varepsilon^{-1} \to \infty} \frac{\sum_{j=1}^{s} \gamma_j +\ln \varepsilon^{-1}}{1+\ln T(\varepsilon^{-1},s)} < \infty
		.
	\end{equation}	
	We do not require the definition of $T$-tractability here (see, e.g., \cite[p.~291]{NW08}). For our purpose it suffices to know that the special case
	\begin{equation*}
		T(\varepsilon^{-1},s)=\exp((1+\ln s)(1+\ln \varepsilon^{-1}))
	\end{equation*}
	corresponds to quasi-polynomial tractability. For this instance condition \eqref{crit_Ttract} becomes 
	\begin{equation*}
		\limsup_{s+\varepsilon^{-1} \to \infty} \frac{\sum_{j=1}^{s} \gamma_j +\ln \varepsilon^{-1}}{1+(1+\ln s)(1+\ln \varepsilon^{-1})} 
		< 
		\infty
		.
	\end{equation*}
	Hence, setting $\varepsilon=1$ and letting $s \rightarrow \infty$, we obtain
	\begin{equation} \label{eq:cond_PT}
		\limsup_{s \rightarrow \infty} \frac{1}{\ln s} \sum_{j=1}^s \gamma_j < \infty
		.
	\end{equation}
	From item 2, we know that condition~\eqref{eq:cond_PT} implies polynomial tractability and this completes the proof of item 3.
	\end{enumerate}
	
	For the remaining conditions in items $4$ to $6$, note that since $\alpha>1$ the trace of $W_s$, denoted by $\tra(W_s)$, is finite for all $s \in \NN$. Indeed,  we have 
	\begin{equation}\label{trace_Ws}
		\tra(W_s) = \sum_{\bsk \in \ZZ^s} r_{s,\alpha,\bsgamma}(\bsk) = \prod_{j=1}^s (1+2 \gamma_j \zeta(\alpha)) < \infty
		.
	\end{equation}
	In this case, we can use relations between notions of tractability for $\Lambda^{{\rm all}}$ and $\Lambda^{{\rm std}}$ which were first proved in \cite{WW01} (see also \cite[Section~26.4.1]{NW12}). 
	\begin{enumerate}
		\item[4.-6.] We prove the three statements in one combined argument. If any of the three conditions \eqref{cond_wt_std}, \eqref{cond_tswt_std} for $\sigma \le 1$ or \eqref{cond_uwt_std} holds, then this implies that the weights $(\gamma_j)_{j\ge1}$ have to become eventually less than $1$ since otherwise, for every $\sigma \in (0,1]$,
		\begin{equation*}
			\lim_{s \to \infty} \frac1{s^\sigma} \sum_{j=1}^s \gamma_j = \lim_{s \to \infty} \frac{s}{s^\sigma} = \lim_{s \to \infty} s^{1-\sigma} \ge 1
			.
		\end{equation*}
		Therefore, we have by Theorem \ref{thm_all} that uniform weak tractability (and even quasi-polynomial tractability) holds for the class $\Lambda^{\text{all}}$. Furthermore, from \eqref{trace_Ws} we obtain 
		\begin{align*}
			\frac{\ln(\tra(W_s))}{s^\sigma} 
			&=
			\frac1{s^\sigma} \ln \left( \prod_{j=1}^{s}\left(1 + 2 \gamma_j \zeta(\alpha) \right) \right) 
			\\
			&=
			\frac1{s^\sigma} \sum_{j=1}^s \ln(1 + 2 \gamma_j \zeta(\alpha))
			\le
			\frac{2 \zeta(\alpha)}{s^\sigma} \sum_{j=1}^s \gamma_j
			,
		\end{align*}   
		and thus if $\frac1{s^\sigma} \sum_{j=1}^s \gamma_j$ converges to $0$ as $s$ goes to infinity, with $\sigma \in (0,1]$, then
		\begin{equation*}
			\lim_{s \to \infty} \frac{\ln(\tra(W_s))}{s^\sigma}
			\le
			\lim_{s \to \infty} \frac{2 \zeta(\alpha)}{s^\sigma} \sum_{j=1}^s \gamma_j
			=
			0
			.
		\end{equation*}
		By the same argument as in the proof of \cite[Theorem 26.11]{NW12}, we obtain that \eqref{cond_wt_std} implies weak tractability for the class $\Lambda^{\text{std}}$. The proof for the other two notions of weak tractability can be obtained analogously by appropriately modifying the argument used in the proof of \cite[Theorem 26.11]{NW12}.
		
		For $(\sigma,\tau)$-weak tractability with $\sigma >1$ we can use well-known results from \cite{KSW06} or \cite{KLP18}. For example, from \cite[Lemma~6]{KSW06} or likewise  from \cite[Proposition~1]{KLP18} one can easily deduce that for weights satisfying $1 \ge \gamma_1\ge \gamma_2 \ge \dots \ge 0$ we have $n(\varepsilon,\APP_s;\Lambda^{{\rm std}}) \le C \, \varepsilon^{-\eta} \, K^s$ for reals $C,\eta>0$ and $K>1$, and hence $$\ln n(\varepsilon,\APP_s;\Lambda^{{\rm std}}) \le \ln C + \eta \ln \varepsilon^{-1} + s \ln K.$$ This implies $$\lim_{s+\varepsilon^{-1}\rightarrow \infty} \frac{\ln n(\varepsilon,\APP_s;\Lambda^{{\rm std}})}{s^{\sigma}+\varepsilon^{-\tau}}=0\quad \mbox{for every}\ \sigma >1$$ and hence $\APP$ is $(\sigma,\tau)$-weakly tractable for every $\sigma>1$.
		
		It remains to prove the necessary conditions for the three notions of weak tractability. From our preliminary remark we know that necessary conditions on tractability for integration are also necessary conditions for approximation. Hence it suffices to study integration ${\rm INT}_s$.
		
		Due to, e.g., \cite{W09}, we know that weak tractability of integration for $\cH_{s,\alpha,\bsgamma}$ holds if and only if
		\begin{equation*}
			\lim_{s \to \infty} \frac1{s} \sum_{j=1}^s \gamma_j = 0
		\end{equation*}
		and thus this is also a necessary condition for weak tractability of approximation.
	
		We are left to prove the necessity of the respective conditions for uniform weak tractability and $(\sigma,\tau)$-weak tractability for integration. These follow from a similar approach as used in \cite{W09} for weak tractability. We just sketch the argument which is more or less an application and combination of results from \cite{HW2001} and \cite{NW2001}. 
		
		In \cite[Theorem~4.2]{HW2001} Hickernell and Wo\'{z}niakowski showed that integration in a suitably constructed weighted Sobolev space $\cH^{{\rm Sob}}_{s,r,\widehat{\bsgamma}}$ of smoothness $r=\lceil \alpha/2\rceil$ and with product weights $\widehat{\bsgamma}=(\widehat{\gamma}_j)_{j \ge 1}$ is no harder than in the weighted Korobov space $\cH_{s,\alpha,\bsgamma}$. The weighted Sobolev space $\cH^{{\rm Sob}}_{s,r,\widehat{\bsgamma}}$ is a reproducing kernel Hilbert space whose kernel is a product of one-dimensional reproducing kernels (see \cite[Eq.~(23)]{HW2001}), the corresponding definition can be found in \cite[Eq.~(19)]{HW2001}. The product weights of the Korobov and Sobolev spaces are related by $\gamma_j=\widehat{\gamma}_j G_r$ with a multiplicative non-negative factor $G_r$. Hence, it suffices to study necessary conditions for tractability of integration in $\cH^{{\rm Sob}}_{s,r,\widehat{\bsgamma}}$. To this end we proceed as in \cite[Section~5]{HW2001}.  
		
		The univariate reproducing kernel $K_{1,\widehat{\gamma}}$ of $\cH^{{\rm Sob}}_{1,r,\widehat{\gamma}}$ (case $s=1$) can be decomposed as
		\begin{equation*}
			K_{1,\widehat{\gamma}}
			=
			R_1 +\widehat{\gamma}(R_2+R_3)
			,
		\end{equation*}
		 where each $R_j$ is a reproducing kernel of a Hilbert space $\cH(R_j)$ of univariate functions. In our specific case, we have $R_1=1$ and $\cH(R_1)={\rm span}(1)$ (cf.~\cite[p.~679]{HW2001}). It is then shown in \cite[Section~5]{HW2001} that all requirements of \cite[Theorem~4]{NW2001} are satisfied. For the involved parameter $\alpha_1$, we have $\alpha_1=\|h_{1,1}\|_{\cH(R_1)}^2=1$ 
		 (this is easily shown, since $R_1=1$). Furthermore, we have that the parameter $\alpha$ in \cite[Theorem~4]{NW2001} (not to be confused with the smoothness parameter $\alpha$ of the Korobov space)  satisfies $\alpha \in [1/2,1)$, since $h_{1,2,(0)}\not=0$ and $h_{1,2,(1)}\not =0$, as shown in \cite[p.~681]{HW2001} (where $h_{1,2,(j)}$ is called $\eta_{1,2,(j)}$ for $j \in \{0,1\}$). In order to avoid any misunderstanding, we denote the $\alpha$ in \cite[Theorem~4]{NW2001} by $\widetilde{\alpha}$ from now on. Then, we apply \cite[Theorem~4]{NW2001} and obtain for the squared $n$-th minimal integration error in the considered Sobolev space that
		\begin{equation*} 
			e^2(n,{\rm INT}_s) 
			\ge 
			\sum_{\uu \subseteq \{1,\ldots,s\}} (1-n \widetilde{\alpha}^{|\uu|})_+\, \alpha_2^{|\uu|} \prod_{j \in \uu} \widehat{\gamma}_j 
			\prod_{j \not \in \uu}(1+\widehat{\gamma}_j \alpha_3)
			,
		\end{equation*}
		where $\alpha_2, \alpha_3$ are positive numbers (cf.~\cite[p.~425]{NW2001}) and $(x)_+:=\max(x,0)$. This implies
		\begin{align*} 
			e^2(n,{\rm INT}_s) 
			&\ge
			\sum_{\uu \subseteq \{1,\ldots,s\}} (1-n \widetilde{\alpha}^{|\uu|}) \, \alpha_2^{|\uu|} \prod_{j \in \uu} \widehat{\gamma}_j
			\\
			&=
			\prod_{j=1}^s (1+\alpha_2 \widehat{\gamma}_j) - n \prod_{j=1}^s (1 + \alpha_2 \,\widetilde{\alpha}\, \widehat{\gamma}_j)
			,
		\end{align*}	  
		which in turn yields that
		\begin{equation*}
			n(\varepsilon,{\rm INT}_s) 
			\ge 
			\frac{\prod_{j=1}^s (1+\alpha_2 \widehat{\gamma}_j) - \varepsilon^2}{\prod_{j=1}^s (1 + \alpha_2 \,\widetilde{\alpha}\, \widehat{\gamma}_j)}
			.
		\end{equation*}
		Taking the logarithm, we obtain
		\begin{align*}
			\ln n(\varepsilon,{\rm INT}_s) 
			&\ge
			\ln\left(\prod_{j=1}^s (1+\alpha_2 \widehat{\gamma}_j)\right) + \ln\left(1-\frac{\varepsilon^2}{\prod_{j=1}^s (1+\alpha_2 \widehat{\gamma}_j)}\right)
			\\
			&\quad- \ln\left( \prod_{j=1}^s (1+\alpha_2 \,\widetilde{\alpha}\, \widehat{\gamma}_j)\right)
			\\
			&\ge
			\sum_{j=1}^s \ln(1+\alpha_2 \widehat{\gamma}_j) - \alpha_2 \widetilde{\alpha} \sum_{j=1}^s \widehat{\gamma}_j + \ln(1-\varepsilon^2)
			,
		\end{align*}
		where we used that $\ln (1+x) \le x$ for any $x \ge 0$. 
		
		Recall that $\widetilde{\alpha} < 1$ and set $c:=(1+\widetilde{\alpha})/2$. Then $c \in (\widetilde{\alpha},1)$ and since 
		\begin{equation*}
			\lim_{x \to0} \frac{\ln (1+x)}{x} = 1
			,
		\end{equation*}
		it follows that $\ln(1+x) \ge c x$ for sufficiently small $x>0$. 
		
		Next, assume that we have $(\sigma,\tau)$-weak tractability for integration in the considered Sobolev space. Then the weights $\widehat{\gamma}_j$ necessarily tend to zero for $j \to \infty$ (see \cite[Theorem~4, Item~4]{NW2001}). In particular, there exists an index $j_0>0$, 
		such that for all $j \ge j_0$ we have $\ln(1+\alpha_2 \widehat{\gamma}_j) \ge c \,\alpha_2\, \widehat{\gamma}_j$. Hence for $s \ge j_0$, we have
		\begin{equation*}
			\ln n(\varepsilon,{\rm INT}_s)
			\ge 
			\alpha_2(c-\widetilde{\alpha}) \sum_{j=j_0}^s \widehat{\gamma}_j + \ln(1-\varepsilon^2) + \mathcal{O}(1)
			.
		\end{equation*}
		Note that $c-\widetilde{\alpha} >0$. Since we assume $(\sigma,\tau)$-weak tractability, we have that
		\begin{align*}
			0
			=
			\lim_{s+\varepsilon^{-1} \rightarrow \infty} \frac{\ln n(\varepsilon,{\rm INT}_s)}{s^{\sigma}+\varepsilon^{-\tau}}
			\ge
			\lim_{s+\varepsilon^{-1} \rightarrow \infty} \frac{\alpha_2(c-\widetilde{\alpha}) \sum_{j=j_0}^s \widehat{\gamma}_j + \ln(1-\varepsilon^2)}{s^{\sigma}+\varepsilon^{-\tau}}
			.
		\end{align*}
		This, however, implies that 
		\begin{equation*}
			\lim_{s \rightarrow \infty} \frac{1}{s^{\sigma}} \sum_{j=1}^s \widehat{\gamma}_j=0
			,
		\end{equation*}
		and thus, since $\gamma_j$ and $\widehat{\gamma}_j$ only differ by a multiplicative factor, that 
		\begin{equation*}
			\lim_{s \rightarrow \infty} \frac{1}{s^{\sigma}} \sum_{j=1}^s \gamma_j
			=
			0
			.
		\end{equation*}
		Now the claimed results follow.
	\end{enumerate}
\end{proof}

\paragraph{Acknowledgment.} The authors are grateful to two anonymous referees for important and very useful comments on this paper.

\begin{small}
\noindent\textbf{Authors' addresses:}

\medskip
\noindent Adrian Ebert\\
Johann Radon Institute for Computational and Applied Mathematics (RICAM)\\
Austrian Academy of Sciences\\
Altenbergerstr.~69, 4040 Linz, Austria\\
E-mail: \texttt{adrian.ebert@oeaw.ac.at}

\medskip

\noindent Friedrich Pillichshammer\\
Institut f\"{u}r Finanzmathematik und Angewandte Zahlentheorie\\
Johannes Kepler Universit\"{a}t Linz\\
Altenbergerstr.~69, 4040 Linz, Austria\\
E-mail: \texttt{friedrich.pillichshammer@jku.at}
\end{small}


\begin{thebibliography}{99}

\bibitem{GW11} 
M.~Gnewuch, H.~Wo\'{z}niakowski. 
{\it Quasi-polynomial tractability.} 
J. Complexity 27, no. 3--4, 312--330, 2011.

\bibitem{HW2001}
F.~Hickernell, H.~Wo\'zniakowski. 
{\it Tractability of multivariate integration for periodic functions.} 
J. Complexity 17, no. 4, 660--682, 2001.

\bibitem{KLP18}
P.~Kritzer, H.~Laimer, F.~Pillichshammer.
{\it Tractability of $\mathbb{L}_2$-approximation in hybrid function spaces.}
Funct. Approx. Comment. Math. 58, no. 1, 89--104, 2018.

\bibitem{KW19} 
P.~Kritzer, H.~Wo\'{z}niakowski. 
{\it Simple characterizations of exponential tractability for linear multivariate problems.} 
J. Complexity 51, 110--128, 2019.

\bibitem{KSW06} F.Y.~Kuo, I.H.~Sloan, H.~Wo\'{z}niakowski.
{\it Lattice rules for multivariate approximation in the worst case setting}. 
In: H.~Niederreiter, D.~Talay (eds), Monte Carlo and Quasi-Monte Carlo Methods 2004, Springer, Berlin, Heidelberg, 2006.

\bibitem{NSW04}
E.~Novak, I.H.~Sloan, H.~Wo\'zniakowski.
{\it Tractability of approximation for weighted Korobov spaces on classical and quantum computers}.
Found. Comput. Math. 4, no. 2, 121--156, 2004.

\bibitem{NW2001}
E.~Novak, H.~Wo\'zniakowski. 
{\it Intractability results for integration and discrepancy}.
J. Complexity 17, no. 2, 388--441, 2001.

\bibitem{NW08}
E.~Novak, H.~Wo\'zniakowski. 
{\it Tractability of Multivariate Problems}.
Volume I: Linear Information, EMS, Zurich, 2008.

\bibitem{NW10}
E.~Novak, H.~Wo\'zniakowski. 
{\it Tractability of Multivariate Problems}.
Volume II: Standard Information for Functionals, EMS, Zurich, 2010.

\bibitem{NW12}
E.~Novak, H.~Wo\'zniakowski.
{\it Tractability of Multivariate Problems}. 
Volume III: Standard Information for Operators, EMS, Zurich, 2012.

\bibitem{S13}
P.~Siedlecki. 
{\it Uniform weak tractability.} 
J. Complexity 29, no. 6, 438--453, 2013.

\bibitem{SW15}
P.~Siedlecki, M.~ Weimar. 
{\it Notes on $(s,t)$-weak tractability: a refined classification of problems with (sub)exponential information complexity.}
J. Approx. Theory 200, 227--258, 2015.

\bibitem{TWW}
J.F.~Traub, G.W.~Wasilkowski, H.~Wo\'{z}niakowski. 
{\it Information-Based Complexity.} 
Academic Press, New York, 1988.

\bibitem{WW99}
G.W.~Wasilkowski, H.~Wo\'zniakowski.
{\it Weighted tensor product algorithms for linear multivariate problems}.
J. Complexity 15, no. 3, 402--447, 1999. 

\bibitem{WW01}
G.W.~Wasilkowski, H.~Wo\'zniakowski.
{\it On the power of standard information for weighted approximation}.
 Found. Comput. Math. 1, no. 4, 417--434, 2001.

\bibitem{W09}
H.~Wo\'zniakowski.
{\it Tractability of multivariate integration for weighted Korobov spaces: My 15 year partnership with Ian Sloan}. 
In: P.~L'Ecuyer, A.~Owen (eds), Monte Carlo and Quasi-Monte Carlo Methods 2008, Springer, Berlin, Heidelberg, 2009.

\end{thebibliography}
\end{document}